\renewcommand{\star}[1]{\ensuremath{{\fourIdx{*}{}{}{}{#1}}}}
\newcommand{\std}[1]{{\fourIdx{\circ}{}{}{}{#1}}}
\newcommand{\bfa}{\mathbf{a}}
\newcommand{\bfb}{\mathbf{b}}
\newcommand{\bfc}{\mathbf{c}}
\newcommand{\bfx}{\mathbf{x}}
\newcommand{\bfy}{\mathbf{y}}
\newcommand{\R}{\mathbb{R}}
\newcommand{\N}{\mathbb{N}}
\newcommand{\C}{\mathbb{C}}
\newcommand{\MFF}{\mathfrak{F}}
\newcommand{\SK}{\mathscr{K}}
\newtheorem{Thm}{Theorem}
\newtheorem*{Thm*}{Theorem}
\newtheorem{Prop}{Proposition}
\title[]{Nonstandard arguments for results about infinite systems of equations in infinitely many variables}
\author{David A. Ross}
\address{Department of Mathematics\\University of Hawaii at Manoa\\
Honolulu, HI 96822}
\email{ross@math.hawaii.edu}
\subjclass[2020]{26E35, 46S20, 15A06, 40H05, 46A45, 12E12}
\keywords{Systems of equations, Hahn--Banach Theorem, Nonstandard Analysis.}
\begin{document}

\begin{abstract}Short nonstandard proofs are given for some results about infinite systems of equations in infinitely many variables.
\end{abstract}

\maketitle

This note is motivated by Nathanson~\cite{nath24x}, which describes several results of the form:
\begin{quote}
{If some property P is true for every finite subset of a set of objects, then P is true simultaneously for all the objects.}
\end{quote}

For example, he examines a result of Abian and Eslami~\cite{abia-esla82} about certain sets $\MFF$ of equations in infinitely many variables, where it is shown that if every finite subset of $\MFF$ has a common solution of some form then $\MFF$ has a common solution of that form.

Results of this form are familiar to mathematical logicians as \emph{compactness} results.
In this note I give nonstandard proofs for some of the results in Nathanson~\cite{nath24x}, as well as others.  In nonstandard analysis the compactness principle is built into the construction of the nonstandard model, and so these results become completely transparent with the nonstandard machinery.  This also makes it possible to identify natural generalizations of the theorems; see, for example, Theorem~\ref{DAR:thm:epsilon} below.

Questions about infinite systems of equations in infinitely many variables go back to the early days of functional analysis; arguably the starting point for the whole field.  The Abian--Eslami result is a relative of a 1913 result of F. Riesz~\cite{ries13}.  Riesz's proof, reproduced in Banach's 1932 Theory of Linear Operations \cite{Banach32}\footnote{an extension of Banach's 1922 thesis} uses something today we would recognize as a special case of the Hahn--Banach Extension Theorem (Hahn~\cite{hahn27}, Banach~\cite{Banach29}), which should probably be attributed to Helly (\cite{helly12}; see Narici and Beckenstein~\cite{NariciBeckenstein} for the historical discussion).

It should also be noted that nonstandard analysis has proved a useful tool in understanding the Hahn--Banach Theorem.  In 1962 Wim Luxemburg~\cite{lux62} showed that the Boolean Prime Ideal Theorem  implies Hahn--Banach.  A few years later he refined the result \cite{lux69} to show that Hahn--Banach is equivalent to the statement that  every Boolean algebra has a $[0,1]$-valued measure.  Both these proofs used nonstandard analysis.  See David Pincus~\cite{Pincus}  for the proof that the Boolean Prime Ideal Theorem is strictly stronger than Hahn--Banach, as well as a comprehensive discussion of Luxemburg's results and other equivalents and inequivalents to the Hahn--Banach Theorem.

This paper can be viewed as a companion to the forthcoming Nathanson and Ross~\cite{NR}, which uses Tychonoff's Theorem to `standardize' some of these nonstandard proofs, and finds some further generalizations.

\section{Finite polynomials}

\begin{Thm}[Abian~\cite{Abian1972}]\label{thm:finitering}Let $R$ be a finite ring, $\MFF$ a set of (finite) polynomials over (possibly infinitely many) variables $\{x_i\}_{i\in I}$, and suppose every finite subset of $\MFF$ has a common root in $R$:
\[\forall p_1,\dots,p_n\in\MFF\ \exists \bfx=\{x_i\}{_i\in I}\subseteq R\textrm{ such that }p_k(\bfx)=0\quad (1\le k\le n)\]
Then there exists $\bfx\in R^I$ such that $p(\bfx)=0$ for every $p\in\MFF$.
\end{Thm}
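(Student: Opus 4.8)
The plan is to run the usual compactness argument, but to let the construction of the nonstandard model absorb the compactness. I would work in an enlargement, so that the concurrent relation ``$F$ is a finite subset of $\MFF$'' is realized: there is an internal hyperfinite set $\mathscr{G}$ with $\MFF\subseteq\mathscr{G}\subseteq\star{\MFF}$. (Equivalently one could take any extension $|\MFF|^{+}$-saturated and invoke the finite intersection property for the internal sets $\{\bfx:\star{p}(\bfx)=0\}$, $p\in\MFF$; the two routes are the same in spirit.)

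Now transfer the hypothesis. Phrased as a first-order statement over the set of finite subsets of $\MFF$, the hypothesis reads: for every finite $F\subseteq\MFF$ there is $\bfx\in R^{I}$ with $p(\bfx)=0$ for all $p\in F$. Its transfer reads: for every $\star$-finite $F\subseteq\star{\MFF}$ there is an internal $\bfx\in\star{(R^{I})}$, i.e.\ an internal function $\bfx\colon\star{I}\to\star{R}$, with $\star{p}(\bfx)=0$ for all $p\in F$. Since $R$ is finite we have $\star{R}=R$, so $\bfx$ takes values in $R$ --- this is the only place the finiteness of $R$ is used, and it is what lets us skip any standard-part construction. Applying the transferred statement with $F=\mathscr{G}$ produces such an $\bfx$; in particular $\star{p}(\bfx)=0$ for every $p\in\MFF$.

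Finally, let $\std{\bfx}\in R^{I}$ be the restriction of $\bfx$ to the standard indices, $\std{x}_{i}=\bfx(i)$ for $i\in I$. Fix $p\in\MFF$. Being an honest finite polynomial, $p$ involves only finitely many variables $x_{i_{1}},\dots,x_{i_{m}}$ with each $i_{j}\in I$, and a routine application of transfer to the evaluation map shows $\star{p}$ agrees with $p$ on standard arguments; hence $p(\std{\bfx})=p(\bfx(i_{1}),\dots,\bfx(i_{m}))=\star{p}(\bfx)=0$. As $p\in\MFF$ was arbitrary, $\std{\bfx}$ is a common root of all of $\MFF$. The argument has essentially no computational content; the one point to handle carefully is the bookkeeping in the transfer step --- insisting that the hypothesis be written as a quantification over \emph{finite subsets} of $\MFF$, so that its transfer quantifies over the internal hyperfinite subsets of $\star{\MFF}$, to which $\mathscr{G}$ belongs.
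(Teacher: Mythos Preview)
Your proof is correct and follows essentially the same route as the paper's: obtain by saturation an internal $\bfx$ simultaneously solving $\star{p}(\bfx)=0$ for all standard $p\in\MFF$, invoke $\star{R}=R$ from finiteness of $R$, and restrict to standard indices. The paper compresses your first two paragraphs into the single phrase ``by saturation'' (which is exactly the alternative you mention, the finite-intersection-property formulation), but the argument is the same.
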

\begin{proof}[Proof of Theorem~\ref{thm:finitering}]
By saturation there exists a $\bfx\in\star{\R_I}$ such that $\star{p}(\bfx)=0$ for every $p\in\MFF$.
Since $R$ is finite, $\star{R}=R$.  It follows that for $i\in I$ standard, $x_i\in R$, so for any standard $p$, $p(\bfx|_I)=\star{p}(\bfx)$.  In particular, $p(\bfx|_I)=0$ for every $p\in\MFF$.
\end{proof}

I note that this result is also an easy exercise in elementary model theory.  Let $\mathcal{L}=\langle \mathbf{0},\mathbf{1},+,\times,\mathbf{r}_i, \mathbf{c}_j\rangle_{i\le n, j\in I}$ be the first order language of rings, supplemented with constants $\mathbf{r}_i$ for every element of the ring $R$ and new constants $\mathbf{c}_j$.  Let $\MFF'$ be the set
\[ \{p(\overline{\mathbf{c}})=\mathbf{0} : p(\overline{x})\in MFF\} \]

Then $Th(R)\cup \MFF'$ is finitely consistent, so by the Compactness Theorem it has a model, which must be isomorphic to $R$.  Moreover, the interpretation $\bfx$ of $\overline{c}$ in this copy of $R$ is a solution to $\MFF$.

This result does not easily extend to infinite rings.  For example, Abian gives the following example:

\begin{gather*}
(x-1)-y_1^2\\ (x-2)-y_2^2\\ (x-3)-y_3^2\\ \vdots\\ (x-n)-y_n^2\\ \vdots
\end{gather*}

Any finite subset of these has a solution in $\R$, but the full set does not.  What makes the example work is that the finite satisfiability of the equations requires arbitrarily large solutions.  If we bound the solutions, then we do get a positive result.

\begin{Thm}\label{thm:bounded}Let $\MFF$ a set of (finite) real polynomials over (possibly infinitely many) variables $\{x_i\}_{i\in I}$, and suppose every finite subset of $\MFF$ has a common root in $\displaystyle[-M, M]^I$ for some $M$:
\[\forall p_1,\dots,p_n\in\MFF\ \exists \bfx=\{x_i\}_{i\in I}\subseteq [-M, M]\textrm{ such that }p_k(\bfx)=0\quad (1\le k\le n)\]
Then there exists $\bfx\in [-M, M]^I$ such that $p(\bfx)=0$ for every $p\in\MFF$.
\end{Thm}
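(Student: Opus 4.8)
The plan is to run the same saturation argument as in the proof of Theorem~\ref{thm:finitering}, replacing the role played there by the finiteness of $R$ (which forced $\star{R}=R$) with the fact that $[-M,M]$ is compact, so that the standard part map carries $\star{[-M,M]}$ onto $[-M,M]$. Concretely, by transfer the hypothesis guarantees that for every finite collection $p_1,\dots,p_n\in\MFF$ the internal set of $\bfx$ in the nonstandard extension of $[-M,M]^I$ satisfying $\star{p_k}(\bfx)=0$ for $1\le k\le n$ is nonempty. Hence the family of internal sets $\{\,\bfx : \star{p}(\bfx)=0\,\}$, indexed by $p\in\MFF$, has the finite intersection property, and by saturation there is a single $\bfx$ in the nonstandard extension of $[-M,M]^I$ with $\star{p}(\bfx)=0$ for every $p\in\MFF$.

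Next I would extract a standard solution. For each standard $i\in I$ the coordinate $x_i$ lies in $\star{[-M,M]}$, hence is a finite hyperreal and has a standard part $y_i:=\std{x_i}$; since $[-M,M]$ is closed, $y_i\in[-M,M]$. Put $\bfy=(y_i)_{i\in I}\in[-M,M]^I$. To see that $\bfy$ is a common root, fix a standard $p\in\MFF$. It involves only finitely many of the variables, say $x_{i_1},\dots,x_{i_k}$ with the indices standard, and since $p$ has real coefficients, $\star{p}(\bfx)=p(x_{i_1},\dots,x_{i_k})$. Taking the standard part of the equation $p(x_{i_1},\dots,x_{i_k})=0$ and using that the standard part map $\std{\cdot}$ is a ring homomorphism on the finite hyperreals (equivalently, that polynomials are continuous), we get $p(y_{i_1},\dots,y_{i_k})=0$, i.e.\ $p(\bfy)=0$.

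The only step that goes beyond the argument for Theorem~\ref{thm:finitering} is this last passage through standard parts, and I expect it to be the only place needing any care. It is nonetheless routine: it needs just that $[-M,M]$ is closed (so standard parts stay in the interval) and that polynomial evaluation commutes with taking standard parts of finite hyperreals. This is precisely the point at which compactness of $[-M,M]$ does the work that finiteness of $R$ did before, and it also explains why Abian's unbounded example fails — there the finitely satisfiable subsystems force the witnessing coordinates to be infinite, so no standard part exists.
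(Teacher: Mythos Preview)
Your proof is correct and follows essentially the same route as the paper: obtain a nonstandard common root by saturation, take coordinatewise standard parts (available since each $x_i\in\star{[-M,M]}$ is finite), and conclude $p(\bfy)=\std{\star p(\bfx)}=0$ using that each $p$ has standard coefficients and finitely many standardly-indexed variables. Your write-up simply spells out in more detail the finite-intersection-property use of saturation and the continuity/ring-homomorphism justification for passing standard parts through the polynomial.
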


\begin{proof}[Proof of Theorem~\ref{thm:bounded}]
By saturation there exists a $\bfx\in\star{\R_I}$ such that $\star{p}(\bfx)=0$ for every $p\in\MFF$.
For $i\in I$ standard, $|x_i|\le\Phi(i)$, so $y_i=\std{x_i}$ exists.  Put $\bfy=\{y_i\}_{i\in I}$.
Now, if $p\in\MFF$ is standard then its coefficients are all standard and its variables all have standard indices; it follows that $p(\bfy)=\std{\star{p}(\bfx)}=0$, proving the theorem.\end{proof}

\subsection*{Remarks}
The reader should compare this proof to the nonstandard proof of Tychonoff's Theorem for Hausdorff spaces.

With the obvious modification Theorem~\ref{thm:bounded} is true in $\C$ as well, but in fact Abian has showed that for $\C$ the bound is not necessary.

The proof works \emph{mutatis mutandis} if we choose a different $M_i$ for each $i$; that is, change the hypothesis/conclusion to $\bfx\in\prod_{i\in I}[-M_i, M_i]$.

In fact, this result has an obvious generalization:

\begin{Thm}\label{thm:common}Let $\MFF$ a set of functions over (possibly infinitely many) variables $\{x_i\}_{i\in I}$ with the property that each $f\in\MFF$ (a)~only depends on finitely many variables in $I$, and (b)~is continuous in each variable.  Let $\SK$ be a subset of $\R^I$ which is compact in the product topology. Suppose every finite subset of $\MFF$ has a common root in $\SK$:
\[\forall f_1,\dots,f_n\in\MFF\ \exists \bfx=\{x_i\}_{i\in I}\in\SK\textrm{ such that }f_k(\bfx)=0\quad (1\le k\le n)\]
Then there exists $\bfx\in\SK$ such that $p(\bfx)=0$ for every $p\in\MFF$.
\end{Thm}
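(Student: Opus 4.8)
The plan is to imitate the proof of Theorem~\ref{thm:bounded}, replacing the observation that a bounded internal real has a standard part by the nonstandard characterization of compactness. First I would use saturation to produce a single $\bfx\in\star\SK$ with $\star f(\bfx)=0$ for every $f\in\MFF$. For each $f\in\MFF$ the set $A_f=\{\,\bfz\in\star\SK:\star f(\bfz)=0\,\}$ is internal, and for any $f_1,\dots,f_n\in\MFF$ a common root $\bfz\in\SK$ supplied by the hypothesis lies in $A_{f_1}\cap\cdots\cap A_{f_n}$ (since $\bfz\in\SK\subseteq\star\SK$ and $\star f_k(\bfz)=f_k(\bfz)=0$ for the standard $f_k$ and standard $\bfz$); so $\{A_f\}_{f\in\MFF}$ has the finite intersection property and, in a sufficiently saturated model, $\bigcap_{f\in\MFF}A_f\neq\emptyset$.

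Next I would descend to a standard solution. Since $\SK$ is compact in the product topology, every point of $\star\SK$ is nearstandard, so the standard part $\bfy=\std{\bfx}$ exists in $\SK$; unwinding nearstandardness in the product topology, this says precisely that $x_i\approx y_i$ for every standard $i\in I$ (and each $y_i$ is a standard real, $\bfy$ being a standard element of $\R^I$).

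It then remains to verify $f(\bfy)=0$ for each $f\in\MFF$. Fix a standard $f\in\MFF$. By (a), $f$ depends only on the variables indexed by some finite $J\subseteq I$; since $f$ is standard so is $J$, hence each index in $J$ is standard, and therefore $x_i\approx y_i$ for every $i\in J$. Viewing $f$ as a standard continuous function of the finitely many real variables $(x_i)_{i\in J}$, the nonstandard criterion for continuity at the standard point $(y_i)_{i\in J}$ gives $\star f(\bfx)\approx f(\bfy)$; as $\star f(\bfx)=0$ and $f(\bfy)$ is a standard real, $f(\bfy)=0$. Since $f$ was an arbitrary element of $\MFF$, $\bfy$ is the desired common root. (Choosing $\SK=\prod_{i\in I}[-M,M]$, or $\prod_{i\in I}[-M_i,M_i]$, recovers Theorem~\ref{thm:bounded} and the remark following it.)

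I expect the only real content to be the single nonstandard input — that a compact space has all of its $\star$-points nearstandard — after which the product topology causes no difficulty; this is exactly the mechanism behind the nonstandard proof of Tychonoff's theorem alluded to in the Remarks. The one place to be careful is that hypothesis (b) must be used as \emph{joint} continuity of $f$ in the finitely many variables it involves: separate continuity would not suffice, since, for instance, $f(x,y)=xy/(x^2+y^2)$ with $f(0,0)=0$ is continuous in each variable while $\star f$ fails to carry the monad of $(0,0)$ into the monad of $f(0,0)=0$.
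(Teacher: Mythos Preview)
Your proposal is correct and is precisely the generalization the paper has in mind: the paper states Theorem~\ref{thm:common} without proof, introducing it only as an ``obvious generalization'' of Theorem~\ref{thm:bounded}, and your argument is exactly the proof of Theorem~\ref{thm:bounded} with the standard-part map on $[-M,M]$ replaced by nearstandardness in the compact set $\SK$. Your closing caveat about joint versus separate continuity is a genuine and well-taken observation about hypothesis~(b) as literally written.
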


\section{Functions of infinitely many variables}

What if the functions are functions of infinitely many variables? The functional analysts of the early 20th century understood that even for linear functions of infinitely many variables, constraints needed to be added.

For example, Helly~\cite{helly21} offered the following cautionary example in 1921:

\begin{align*}
1=&x_1+x_2+x_3+x_4+x_5+\cdots\\
1=&\phantom{x_1+}\ x_2+x_3+x_4+x_5+\cdots\\
1=&\phantom{x_1+x_2+}\ x_3+x_4+x_5+\cdots\\
&\phantom{x_1+x_2+}\vdots
\end{align*}

I now give a short nonstandard proof of the following theorem of Abian and Eslami~\cite{abia-esla82}:

\begin{Thm}[Abian and Eslami 1982]\label{thetheorem}
Let $M>0$, and $p,q>1$ be conjugate, ie $\frac{1}{p}+\frac{1}{q}=1$.  Let $I$ be an infinite set, $\bfa_i\in\ell^p$ and $b_i\in\R$ for all $i\in I$, and suppose that for every finite $I'\subseteq I$
the equations
\[\sum_{n=0}^\infty a_{in}x_n=b_i\qquad\qquad (i\in I')\]
have a simultaneous solution $\bfx\in\ell^q$ with $\|\bfx\|_q\le M$.
(Equivalently, for every finite $I'\subseteq I$ there exists $\bfx\in\ell^q$ with   $\|\bfx\|_q\le M$ and $\bfa_i\cdot\bfx=b_i$ for $i\in I'$.)
 Then there exists $\bfx\in\ell^q$ with   $\|\bfx\|_q\le M$ and $\bfa_i\cdot\bfx=b_i$ for all $i\in I$.

\end{Thm}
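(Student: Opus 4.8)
The plan is to follow the pattern of the proof of Theorem~\ref{thm:bounded}: manufacture a single internal solution by saturation, take standard parts coordinatewise, and then check that nothing is lost in the limit. Work in a $\kappa$-saturated nonstandard model with $\kappa>\max(|I|,\aleph_0)$. For each $i\in I$ set
\[A_i=\{\bfx\in\star{\ell^q}:\|\bfx\|_q\le M\text{ and }\star{\bfa_i}\cdot\bfx=b_i\}.\]
Each $A_i$ is internal, and the transferred hypothesis, applied to finite $I'\subseteq I$, says precisely that $\{A_i\}_{i\in I}$ has the finite intersection property; so by saturation there is an internal $\bfX=(X_n)_{n\in\star{\N}}\in\bigcap_{i\in I}A_i$.

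For each standard $n$, $|X_n|\le\|\bfX\|_q\le M$, so $x_n:=\std{X_n}$ exists; put $\bfx=(x_n)_{n\in\N}$. For any standard $N$, taking standard parts in $\sum_{n=0}^N|X_n|^q\le\|\bfX\|_q^q\le M^q$ gives $\sum_{n=0}^N|x_n|^q\le M^q$, and letting $N\to\infty$ through $\N$ shows $\bfx\in\ell^q$ with $\|\bfx\|_q\le M$.

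The remaining task, and the single real obstacle, is to show $\bfa_i\cdot\bfx=b_i$ for every $i\in I$: the difficulty is that $\star{\bfa_i}\cdot\bfX$ is an internal sum running over \emph{all} $n\in\star{\N}$, and we must see that its contribution from unlimited indices is negligible. Fix $i$ and write $\bfa=\bfa_i\in\ell^p$. Since $\bfa$ is standard, its tails $r_N:=\sum_{n>N}|a_n|^p$ ($N\in\N$) are standard reals with $r_N\to0$, and by transfer the internal tail of $\sum|(\star{\bfa})_n|^p$ beyond a standard $N$ equals the standard number $r_N$ (it is $\|\bfa\|_p^p$ minus a finite standard initial segment). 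Hence, for standard $N$, $\star$-H\"older yields
\[\Bigl|\,\star{\bfa}\cdot\bfX-\sum_{n=0}^N a_nX_n\,\Bigr|\le\Bigl(\sum_{n>N}|(\star{\bfa})_n|^p\Bigr)^{1/p}\Bigl(\sum_{n>N}|X_n|^q\Bigr)^{1/q}\le r_N^{1/p}M,\]
the tail sums running over $n\in\star{\N}$. The left side is $\bigl|b_i-\sum_{n=0}^N a_nX_n\bigr|$ and $\sum_{n=0}^N a_nX_n\approx\sum_{n=0}^N a_nx_n$, so taking standard parts gives $\bigl|b_i-\sum_{n=0}^N a_nx_n\bigr|\le r_N^{1/p}M$ for every standard $N$. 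Letting $N\to\infty$ forces $\sum_n a_nx_n=b_i$, i.e. $\bfa_i\cdot\bfx=b_i$; as $i\in I$ was arbitrary, $\bfx$ is the required simultaneous solution.

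Thus the only nontrivial ingredient is the H\"older tail estimate; everything else is the now-familiar ``saturate, then take standard parts'' routine of Theorems~\ref{thm:bounded} and~\ref{thm:common}. This is exactly the step in which the hypotheses $\bfa_i\in\ell^p$ and $\tfrac1p+\tfrac1q=1$ are genuinely used, while the norm bound $M$ plays the role that compactness of $\SK$ played in Theorem~\ref{thm:common}. I anticipate no other subtlety beyond the routine transfer bookkeeping for internal partial sums and tails.
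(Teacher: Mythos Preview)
Your proof is correct and is essentially the paper's own argument: saturate to obtain an internal $\ell^q$-solution with norm $\le M$, take coordinatewise standard parts, bound finite partial $q$-sums to get $\|\bfx\|_q\le M$, and control the tail $\bigl|b_i-\sum_{n\le N}a_{in}x_n\bigr|$ via $\star$-H\"older by $\|\bfa_i^N\|_p\, M\to0$. The only differences are cosmetic---the paper packages your $r_N^{1/p}$ as $\|\bfa_i^N\|_p$ and names the standard-part sequence $\bfy$ rather than $\bfx$---so there is nothing to add.
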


Adopt the notation $\bfa=\{a_n\}_{n\in\N}$ for a sequence of real numbers, $\bfa_i=\{a_{in}\}_{n\in\N}$ for a sequence of sequences of real numbers indexed by $i$, and $\bfa\cdot\bfb=\sum_{n\in\N}a_nb_n$.

If $N\in\N$ let $\bfa^N$ be the sequence:
\[a^N_n= \begin{cases}
        a_n & \text{if } n>N \\
        0 & \text{otherwise}
    \end{cases}
\]

First, a couple of simple facts about $\ell^q$ (and of course $\ell^p$).

\begin{Prop}\label{theprop}Let $\bfc\in\ell^q$ and $N\in\N$.  Then:
\begin{enumerate}[(i)]
\item $\bfc^N\in\ell^q$
\item $\lim_{N\to\infty}\|\bfc^N\|_q=0$
\item $|c_N|\le\|\bfc\|_q$
\end{enumerate}
\end{Prop}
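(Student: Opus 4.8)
The plan is to derive all three items from the single fact that the series $\sum_{n=0}^\infty |c_n|^q$ converges, which holds because $\bfc\in\ell^q$; no nonstandard machinery is needed here, and this is arguably the one spot in the whole argument where an elementary classical estimate is the most economical route.

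For (i), I would observe that $\bfc^N$ agrees with $\bfc$ on the indices $n>N$ and vanishes elsewhere, so $\|\bfc^N\|_q^q = \sum_{n>N}|c_n|^q \le \sum_{n=0}^\infty |c_n|^q = \|\bfc\|_q^q < \infty$; in particular $\bfc^N\in\ell^q$ and $\|\bfc^N\|_q \le \|\bfc\|_q$. For (iii), note that $|c_N|^q$ is one of the nonnegative terms in the sum $\sum_{n=0}^\infty |c_n|^q$, hence $|c_N|^q \le \|\bfc\|_q^q$; taking $q$-th roots gives the claim.

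For (ii), set $S_N = \sum_{n=0}^{N}|c_n|^q$. Since the partial sums increase to $\|\bfc\|_q^q$, which is finite, we get $\|\bfc^N\|_q^q = \|\bfc\|_q^q - S_N \to 0$ as $N\to\infty$, and then $\|\bfc^N\|_q \to 0$ by continuity of $t\mapsto t^{1/q}$ at the origin. The only thing requiring care is bookkeeping with the index convention in the definition of $\bfc^N$ (the cutoff is at $n>N$, not $n\ge N$); there is no real obstacle, and in particular the hypothesis $q>1$ is never used --- the proposition holds verbatim for every $q\in[1,\infty)$.
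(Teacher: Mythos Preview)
Your argument is correct in every detail, including the side remark that $q>1$ is never used. The paper itself offers no proof of this proposition beyond the sentence ``These are obvious, I think,'' so your elementary verification is exactly the sort of thing the author is leaving to the reader; there is nothing to compare.
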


These are obvious, I think.

\begin{proof}[Proof of Theorem~\ref{thetheorem}.]

By saturation\footnote{We need our model to be either polysaturated, or card$(I)^+$--saturated, or to use concurrence like nonstandard analysts did pre-saturation.} there is an $\bfx\in\star{\ell^q}$ such that $\|x\|_q\le M$ and for every (standard) $i\in{I}, \star{\bfa_i}\cdot\bfx=b_i$

Define a standard sequence $y_n$, $n\in\N$, by putting $y_n=\std{x_n}$; this exists by Proposition~\ref{theprop}(iii).

It remains to prove three things:
\begin{enumerate}
\item $\bfy\in\ell_q$
\item $\|\bfy\|_q\le M$
\item $\bfa_i\cdot\bfy=b_i$ for (standard) $i\in I$
\end{enumerate}

For any $N\in\N$, $\sum_{n\le N}|y_n|^q\approx\sum_{n\le N}|x_n|^q\le M^q$; letting $N\to\infty$ proves (1) and (2).

For (3), let $N$ be a standard positive integer, then
\begin{align*}
|b_i-\sum_{n\le N}a_{in}y_n|&\approx  |b_i-\sum_{n\le N}a_{in}x_n|  \quad \textrm{(since $N$ is finite and $y_n\approx x_n$)}\\
&=\sum_{n> N}a_{in}x_n \qquad (\textrm{since }\star{\bfa_i}\cdot\bfx=b_i)\\
&=|\star{\bfa_i^N}\cdot\bfx|\qquad (\textrm{by definition of }\bfa_i^N)\\
&\le \|\star{\bfa_i^N}\|_p\|\bfx\|_q\qquad (\textrm{by }\star{\textrm{H\"{o}lder}})\\
&\le \|\bfa_i^N\|_pM\\
&\to 0\textrm{ as }N\to \infty \quad \textrm{(by Proposition~\ref{theprop}(ii)}
\end{align*}
which proves (3), and the theorem.
\end{proof}

\section{Generalizing the result}

We now consider two possible generalizations: easing the bound $M$ (as discussed in the remarks after  Theorem~\ref{thm:bounded}), and replacing the exact solution with an approximation.  The following does both at once:

\begin{Thm}\label{DAR:thm:epsilon}Let $p,q>1$ be conjugate, ie $\frac{1}{p}+\frac{1}{q}=1$.  Let $M_n>0$ for every $n\in\N$.
Let $I$ be an infinite set, $\bfa_i\in\ell^p$ and $b_i\in\R$ for all $i\in I$.
Then the following are equivalent:

\begin{enumerate}
\item There exists an $\bfx\in\ell^q$ with $\bfa_i\cdot\bfx=b_i$ for all $i\in I$ and $|x_n|\le M_n$ for all $n\in\N$.

\item There is a positive sequence $\{e_N\}_{N\in\N}$ with $\lim\limits_{N\to\infty}e_N=0$ such that for
every finite $I'\subseteq I$ and every $\epsilon>0$, there is an $\bfx$ with (a)~$\|\bfx^N\|_q\le{e_N}$ for all $N$,
(b)~$|x_n|\le M_n$ for all $n\in\N$, and (c)~$|\bfa_i\cdot\bfx-b_i<\epsilon$ for $i\in I'$

\end{enumerate}

\end{Thm}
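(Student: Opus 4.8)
The plan is to follow the proof of Theorem~\ref{thetheorem} closely. The implication $(1)\Rightarrow(2)$ is routine: if $\bfx\in\ell^q$ witnesses $(1)$, set $e_N=\|\bfx^N\|_q+2^{-N}$; this is a \emph{positive} sequence with $e_N\to 0$ by Proposition~\ref{theprop}(ii), and for any finite $I'\subseteq I$ and any $\epsilon>0$ the single point $\bfx$ satisfies (a), (b) and (c) (indeed (c) with equality).

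For $(2)\Rightarrow(1)$ I would argue nonstandardly. Fix $\{e_N\}$ as in $(2)$. For each finite $I'\subseteq I$ and each standard $\epsilon>0$, let $A_{I',\epsilon}$ be the internal set of $\bfx\in\star{\ell^q}$ satisfying $\|\bfx^N\|_q\le\star{e_N}$ for all $N\in\star{\N}$, $|x_n|\le\star{M_n}$ for all $n\in\star{\N}$, and $|\star{\bfa_i}\cdot\bfx-b_i|<\epsilon$ for all $i\in I'$. By transfer of $(2)$ each $A_{I',\epsilon}$ is nonempty, and the family has the finite intersection property, since $A_{I'_1\cup\dots\cup I'_k,\,\min_j\epsilon_j}\subseteq\bigcap_j A_{I'_j,\epsilon_j}$. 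Restricting $\epsilon$ to reciprocals of positive integers makes this a family of at most $\mathrm{card}(I)$ internal sets, so with the saturation used in Theorem~\ref{thetheorem} (see the footnote there) there is a single $\bfx\in\star{\ell^q}$ lying in every $A_{I',\epsilon}$. For this $\bfx$ one has $\|\bfx^N\|_q\le\star{e_N}$ for all $N\in\star{\N}$, $|x_n|\le M_n$ for every standard $n$, and $\star{\bfa_i}\cdot\bfx\approx b_i$ for every standard $i\in I$.

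Now define $\bfy$ by $y_n=\std{x_n}$, which exists since $|x_n|\le M_n$ is standard and finite; then $|y_n|\le M_n$ for all $n\in\N$. For every standard $N'$, $\sum_{n\le N'}|y_n|^q\approx\sum_{n\le N'}|x_n|^q\le\|\bfx\|_q^q$, and $\|\bfx\|_q$ is finite (since $\|\bfx^0\|_q\le e_0$ and $|x_0|\le M_0$), so $\bfy\in\ell^q$. Finally, for a standard $i\in I$ and a standard positive integer $N$,
\begin{align*}
\Bigl|b_i-\sum_{n\le N}a_{in}y_n\Bigr|
&\approx\Bigl|b_i-\sum_{n\le N}a_{in}x_n\Bigr|
\approx\bigl|\star{\bfa_i^N}\cdot\bfx\bigr|\\
&\le\|\star{\bfa_i^N}\|_p\,\|\bfx^N\|_q
\le\|\bfa_i^N\|_p\, e_N\;\longrightarrow\;0\quad(N\to\infty),
\end{align*}
the first $\approx$ because $N$ is finite and $y_n\approx x_n$, the second because $\star{\bfa_i}\cdot\bfx\approx b_i$, the first inequality by $\star$H\"older, the second since $\|\bfx^N\|_q\le e_N$ for standard $N$, and the limit by Proposition~\ref{theprop}(ii) (applied to $\bfa_i\in\ell^p$) together with boundedness of $\{e_N\}$. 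Hence $\bfa_i\cdot\bfy=b_i$, completing $(2)\Rightarrow(1)$.

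The only genuine departure from the proof of Theorem~\ref{thetheorem} is this last estimate: having given up the uniform bound $\|\bfx\|_q\le M$, one bounds the tail $\sum_{n>N}a_{in}x_n$ by $\|\bfa_i^N\|_p\,\|\bfx^N\|_q$ and controls $\|\bfx^N\|_q$ via hypothesis (2)(a) — only the boundedness of $\{e_N\}$ is actually used there, while $e_N\to 0$ is what makes $(1)\Rightarrow(2)$ deliver a bona fide witnessing sequence. I expect the only point needing real care to be the bookkeeping for the saturation step: arranging the $A_{I',\epsilon}$ so that the family has the finite intersection property (hence the unions and minima) and cardinality at most $\mathrm{card}(I)$, so that the saturation already invoked in Theorem~\ref{thetheorem} applies verbatim.
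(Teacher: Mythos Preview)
Your proof is correct and follows the paper's approach essentially verbatim: saturation to obtain a nonstandard near-solution $\bfx$, then $y_n=\std{x_n}$, then the same three verifications. The only cosmetic differences are your $+2^{-N}$ to force strict positivity of $e_N$ (the paper simply sets $e_N=\|\bfx^N\|_q$) and your final bound $\|\bfa_i^N\|_p\,e_N$ with $\|\bfa_i^N\|_p\to 0$ in place of the paper's $\|\bfa_i\|_p\,e_N$ with $e_N\to 0$; both estimates control the same tail $\star{\bfa_i^N}\cdot\bfx=\star{\bfa_i}\cdot\bfx^N$.
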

\begin{proof}\
$\mathbf{(1\Rightarrow2)}$ Given $\bfx$, let $e_N=\|\bfx^N\|_q$.  (2) is immediate.

$\mathbf{(2\Rightarrow1)}$  By saturation there is an $\bfx\in\star{\ell^q}$ such that (a)~$\|\bfx^N\|_q\le{e_N}$ for all standard $N$,
(b)~$|x_n|\le M_n$ for all $n\in\star\N$ (including standard $n$), and (c)~$|\star{\bfa_i}\cdot\bfx-b_i|\approx0$ for $i\in I$.

Put $y_n=\std{x_n}$ for standard $n$.  As in the proof of Theorem~\ref{thetheorem}, it remains to prove three things:
\begin{enumerate}
\item $\bfy\in\ell_q$
\item $|y_n|\le M_n$ for all $n\in\N$
\item $\bfa_i\cdot\bfy=b_i$ for (standard) $i\in I$
\end{enumerate}

(2) is immediate.  If $N<K$ are standard then

\[\sum_{n=N+1}^K|y_n|^q\approx \sum_{n=N+1}^K|x_n|^q\le \|\bfx^N\|_q^q\le{e_N^q}\]

Let $K\to\infty$, obtain $\|\bfy^N\|_q\le{e_N}$.  By the earlier observation, $\bfy\in\ell^q$, proving (1).

For (3), let $N$ be a standard positive integer, then
\begin{align*}
\left|b_i-\sum_{n\le N}a_{in}y_n\right|&\le\left|b_i-\star{\bfa_i}\cdot\bfx\right|+\left|\sum_{n\le N}a_{in}x_n-\sum_{n\le N}a_{in}y_n\right|+\left|\star{\bfa_i}\cdot\bfx-\sum_{n\le N}a_{in}x_n\right|\\
&\approx |\star{\bfa_i}\cdot\bfx^N|\qquad (\textrm{by definition of }\bfx^N)\\
&\le \|\star{\bfa_i}\|_p\|\bfx^N\|_q\qquad (\textrm{by }\star{\textrm{H\"{o}lder}})\\
&\le \|\bfa_i\|_pe_N\\
&\to 0\textrm{ as }N\to \infty
\end{align*}
where for the second line the first term is second term is infinitesimal since $x_n\approx y_n|$ for every $n\le N$ and the second term is infinitesimal by choice of $\bfx$.  This proves (3), and the theorem.
\end{proof}

\end{document}